\theoremstyle{plain}
\newtheorem{theorem}{Theorem}
\newtheorem{lemma}{Lemma}
\theoremstyle{definition}
\newtheorem{definition}{Definition}
\newtheorem{case[theorem]}{Case}
\newcommand{\beql}[1]{\begin{equation}\label{#1}}
\newcommand{\eeq}{\end{equation}}
\newcommand{\Abs}[1]{{\left|{#1}\right|}}
\newcommand{\Set}[1]{{\left\{{#1}\right\}}}
\newcommand{\one}{{\bf 1}}
\newcommand{\RR}{{\mathbb R}}
\newcommand{\ZZ}{{\mathbb Z}}
\newcommand{\QQ}{{\mathbb Q}}
\newcommand{\dens}{{\rm dens\,}}
\newcommand{\rank}{{\rm rank\,}}
\newcommand{\vol}{{\rm vol\,}}
\newcommand{\wt}[1]{\widetilde{#1}}
\newcounter{rem}
\newcounter{step}
\newcounter{mysec}
\newcounter{mysubsec}[mysec]
\begin{document}
\sloppy

\title{Bounded common fundamental domains for two lattices}

\author{Sigrid Grepstad}
\address{\href{https://www.ntnu.edu/imf}{Department of Mathematical Sciences}, Norwegian University of Science and Technology (NTNU), NO-7491, Trondheim, Norway.}
\email{sigrid.grepstad@ntnu.no}

\author{Mihail N. Kolountzakis}
\address{\href{http://math.uoc.gr/en/index.html}{Department of Mathematics and Applied Mathematics}, University of Crete,\\Voutes Campus, 70013 Heraklion, Greece,\newline and \newline \href{https://ics.forth.gr/}{Institute of Computer Science}, Foundation of Research and Technology Hellas, N. Plastira 100, Vassilika Vouton, 700 13, Heraklion, Greece}
\email{kolount@uoc.gr}

\makeatletter
\@namedef{subjclassname@2020}{\textup{2020} Mathematics Subject Classification}
\makeatother
\subjclass[2020]{\ 52B20, 52C22, 11H16}
\thanks{Sigrid Grepstad is supported by Grant 334466 of the Research Council of Norway.}

\keywords{Tiling, lattices.}

\begin{abstract}
We prove that for any two lattices $L, M \subseteq \RR^d$ of the same volume there exists a measurable, bounded, common fundamental domain of them. In other words, there exists a bounded measurable set $E \subseteq \RR^d$ such that $E$ tiles $\RR^d$ when translated by $L$ or by $M$. A consequence of this is that the indicator function of $E$ forms a Weyl--Heisenberg (Gabor) orthogonal basis of $L^2(\RR^d)$ when translated by $L$ and modulated by $M^*$, the dual lattice of {$M$}.
\end{abstract}

\date{\today}

\maketitle

\tableofcontents

\newcommand{\trn}{\Set{0}^m\times\RR^n}
\newcommand{\ztrn}{\ZZ^m\times\RR^n}

\section{Introduction}\label{s:intro}

\subsection{The Steinhaus tiling problem}\label{ss:steinhaus}

A question of Steinhaus from the 1950s \cite{moser1981research,sierpinski1958probleme} asks if there is a subset $E$ of the plane $\RR^2$ such that $E$ tiles the plane when translated by $R_\theta\ZZ^2$, for any value of $\theta$. Here $R_\theta$ denotes the $2\times 2$ matrix which rotates the plane by the angle $\theta$ around the origin. Equivalently we are seeking a set $E$ such that $R_\theta {E}$ tiles the plane when translated by $\ZZ^2$, for any $\theta$.

For a set $E\subseteq\RR^d$ to tile $\RR^d$ when translated by the set $T \subseteq \RR^d$ we mean that the $T$-translates of $E$ partition $\RR^d$. If the set $T$ happens to be a subgroup of $\RR^d$ this is the same as demanding that $E$ contains exactly one element from each coset of $T$ in $\RR^d$. Clearly this definition of tiling makes sense in any abelian group.

The Steinhaus tiling problem bifurcated from the 1980s into two forms:
\begin{itemize}
\item the original, \textit{set-theoretic} formulation where nothing else is expected from the set $E$ but to contain one point from each coset of $R_\theta\ZZ^2$, and this for any $\theta$, and
\item the \textit{measurable} formulation, where the set $E$ is expected to be Lebesgue measurable but, in return, the tiling is demanded {almost} everywhere: for any $\theta$ we only ask that
\begin{equation}\label{tiling-meas}
\sum_{n \in R_\theta\ZZ^2} \one_E(x-n) = 1,\ \ \text{ for almost all } x \in \RR^2.
\end{equation}
\end{itemize}
{We} should add that the problem makes sense in $\RR^d$, $d>2$, as well, where we are seeking a {set} $E$ that tiles simultaneously with all linear transformations of $\ZZ^d$ by an orthogonal matrix (though we must admit that sensible forms of this problem may be stated even with smaller groups). 

The set-theoretic question in the plane ($d=2$) was settled in a major result by Jackson and Mauldin \cite{jackson2002lattice,jackson2002sets,jackson2003survey} who proved the existence of such a set $E$ in the plane. 

The measurable question is still open in the plane. There have been many partial results, almost all of which are of the form ``if a measurable Steinhaus set $E$ exists it must be large near infinity''. For example it is known \cite{beck1989lattice,kolountzakis1996problem} that such a set cannot be bounded. The best result so far in this direction is that in \cite{kolountzakis1999steinhaus} where it is shown that
$$
\int_E \Abs{x}^\alpha\,dx = +\infty \text{ for } \alpha > 46/27.
$$

In an interesting lack of symmetry between the set-theoretic and measurable developments it is now known  \cite{kolountzakis1999steinhaus,kolountzakis2002steinhaus} that there are no measurable Steinhaus sets in dimensions $d>2$ but it is still unknown if there are ``set-theoretic'' Steinhaus sets for $d>2$.

The interested reader should consult the references in  \cite{kolountzakis2017measurable} as well as the most recent paper \cite{kiss2024solutions}, for results on many variations of the Steinhaus question.

\subsection{Common fundamental domains for finitely many lattices}\label{ss:multi}

A \textit{fundamental domain} for an abelian group $H$ within an abelian group $G$ is a subset of $G$ that contains exactly one element from every coset of $H$ in $G$. So, the Steinhaus tiling problem for the plane asks for a common fundamental domain for all groups $R_\theta \ZZ^2$ inside $\RR^2$, for $\theta \in [0, 2\pi)$.

From now on, we focus on the measurable version of the problem where we only ask $E$ to satisfy the tiling equation \eqref{tiling-meas} almost everywhere.

A sensible relaxation of the Steinhaus problem is to look for a common fundamental domain of only a finite family of lattices
\begin{equation}\label{collection}
{L_1, \ldots, L_n \in \RR^d}.
\end{equation}
Any measurable fundamental domain of a lattice has volume equal to the determinant (also called volume) of the lattice. Hence, we must require that all {$L_1, \ldots, L_n$} have the same volume.

In \cite{kolountzakis1997multi} it was proved that if the dual lattices of the collection \eqref{collection} have a direct sum
$$
{L_1^*+\cdots+L_n^*
}$$
then we can find a measurable common fundamental domain for \eqref{collection}. And it was shown in \cite{han2001lattice} that for the case of two lattices only no condition is necessary: Any two lattices of the same volume in $\RR^d$ have a measurable common fundamental domain. (See also \cite{kolountzakis2022functions} for several similar questions.)

In both \cite{kolountzakis1997multi} and \cite{han2001lattice} the constructed fundamental domains are generally unbounded. Since then, it has been an open problem whether two lattices of the same volume in $\RR^d$ have a measurable bounded common fundamental domain in $\RR^d$. This question we answer in this paper:
 
\begin{theorem}\label{th:main}
Suppose $L,M$ are lattices in $\RR^d$ of the same volume.
Then there is a bounded measurable $\Omega \subseteq \RR^d$ which tiles with both $L$ and $M$.
\end{theorem}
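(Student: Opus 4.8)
Applying an invertible linear map $T$ of $\RR^d$ carries $(L,M)$ to $(TL,TM)$, finite unions of polytopes to finite unions of polytopes, bounded sets to bounded sets, and tilings to tilings; so I may assume $L=\ZZ^d$ and $M=A\ZZ^d$ with $|\det A|=1$. The plan is then to dispose of the ``commensurable'' case by a finite combinatorial device and to reduce the general case to it, lowering the dimension of the ``dense part'' of $L+M$ at each stage.

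\textbf{The commensurable case.} Suppose $G:=L+M$ is discrete, hence a lattice containing both $L$ and $M$; since $\det L=\det M$ the indices $[G:L]$ and $[G:M]$ agree, call the common value $k$. With $N:=L\cap M$, inside the torus $\RR^d/N$ the finite subgroups $\bar L:=L/N$ and $\bar M:=M/N$ satisfy $\bar L\cap\bar M=\{0\}$ and $\bar L+\bar M=G/N=:H$, so $H=\bar L\oplus\bar M$ with $|\bar L|=|\bar M|=k$. Picking any bijection $\phi\colon\bar M\to\bar L$ and letting $S\subseteq H$ be its graph inside $\bar L\times\bar M$, the set $S$ is simultaneously a transversal of $\bar L$ and of $\bar M$ in $H$. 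Lift $S$ to a finite $\widetilde S\subseteq G$ and pick a polytope $\psi$ tiling $\RR^d$ by $G$ (a fundamental parallelepiped); then $\widetilde S$ is a transversal of $L$ and of $M$ in $G$, so $\Omega:=\bigcup_{s\in\widetilde S}(\psi+s)$ is a bounded finite union of polytopes that tiles by $L$ and by $M$.

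\textbf{The general case.} The closure $\overline{L+M}$ is a closed subgroup $W\oplus\Lambda$ with $W$ a linear subspace, $\dim W=:n$, and $\Lambda$ a lattice of rank $d-n$ in a complement; $n=0$ is exactly the commensurable case, so I would induct on $n$. A further linear change of coordinates puts $W=\RR^{n}\times\{0\}$ and $\Lambda=\{0\}\times\ZZ^{d-n}$, so $L,M\subseteq\RR^{n}\times\ZZ^{d-n}$; the traces $L\cap W$ and $M\cap W$ are full-rank lattices of $W$, while the projections $L_2,M_2$ of $L,M$ to $\RR^{d-n}$ are full-rank sublattices of $\ZZ^{d-n}$ with $L_2+M_2=\ZZ^{d-n}$, hence commensurable and handled by the device above. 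The intention is to build $\Omega$ fibred over a polytope fundamental domain of $\ZZ^{d-n}=L_2+M_2$ — which settles the last $d-n$ coordinates just as in the commensurable case — and over each fibre to reshape, using only translations taken from the dense subgroup $(L+M)\cap W$ of $W$, a fundamental domain of the $W$-trace of $L$ into one of the $W$-trace of $M$; the denseness of $(L+M)\cap W$ in $W$ is what makes the reshaping possible and what lowers $n$.

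\textbf{The main obstacle.} The crux is keeping the dissection finite. Reshaping in the $W$-directions naively — slicing a box into strips and restacking — runs a Euclidean-algorithm recursion that terminates only for commensurable lengths and otherwise yields infinitely many pieces. The remedy should be to perform the reshaping as a translation scissors-congruence in the spirit of Hadwiger and Glur: two polytopes of equal volume and equal Hadwiger boundary invariant, in particular two boxes of equal volume, are equidecomposable into finitely many translated polytope pieces; one must then arrange that every translation vector used belongs to $L+M$, so that a single finite dissection respects \emph{both} lattice tilings at once. Making a Hadwiger-type equidecomposition compatible with the lattice constraints while keeping the number of pieces finite (and uniformly bounded) is, I expect, where the real difficulty sits.
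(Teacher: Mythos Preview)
Your commensurable case is fine and matches the paper's Lemma on common coset representatives, and your overall split of $\overline{L+M}$ into a dense linear part $W$ and a discrete lattice part is the right skeleton. But the proposal has a genuine gap at the one place that matters: the dense case. Once you have peeled off the discrete $\ZZ^{d-n}$ factor you are left with two lattices in $W\cong\RR^n$ whose sum is dense in $W$, and that $n$-dimensional problem must be solved outright; there is no smaller $n$ to descend to, so your ``induction on $n$'' does not actually reduce anything. What you are calling ``the main obstacle'' is in fact the entire theorem.

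Your suggested remedy, Hadwiger--Glur, does not do the job. That theory produces a finite translation-equidecomposition between two polytopes of equal volume, but with \emph{arbitrary} translation vectors. Here the translations must lie in the specific countable subgroup $L+M$ of $W$, because only then does a single dissection of $\Omega_L$ simultaneously rearrange to a fundamental domain of $M$; nothing in Hadwiger--Glur lets you force the vectors into a prescribed dense subgroup. The paper's argument is completely different and is where the real content lies: one embeds both fundamental parallelotopes $\Omega_L,\Omega_M$ as \emph{windows} of cut-and-project sets coming from a single lattice $\Gamma\subset\RR^d\times\RR^d$ with $p_2(\Gamma)=L+M$; by Duneau--Oguey both model sets are at bounded distance from a lattice, hence from each other; and then a recent theorem of Garber converts bounded-distance equivalence of regular model sets back into $p_2(\Gamma)$-equidecomposability of the windows --- precisely the $(L+M)$-constrained scissors congruence you need. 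A smaller point you also miss: the traces $L\cap W$ and $M\cap W$ need not have equal covolume (only a rational ratio is guaranteed), so even granting the dense case you cannot apply it to them directly; the paper fixes this by passing to equal-volume super-lattices and compensating with finite sets on the discrete side.
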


The important technical breakthrough arises in the special case below when $L$ and $M$ have a direct sum. This is made possible using one of the main results in \cite{EGKL2025}.

\begin{theorem}\label{th:dense}
If $L, M \subseteq \RR^d$ are lattices of the same volume and $\overline{L+M}=\RR^d$ then there is a bounded, measurable $E \subseteq \RR^d$ such that $L \oplus E = M \oplus E = \RR^d$ are both tilings. 
\end{theorem}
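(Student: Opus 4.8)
The plan is to reduce the problem to a one-parameter situation and then use a marginal/slicing argument together with the main result of \cite{G2024}. First I would normalize so that both lattices have volume $1$, and pick a common sublattice structure: since $\overline{L+M}=\RR^d$, the group $L+M$ is dense, so for any direction we can find vectors of $L$ and $M$ that are arbitrarily close. The key geometric idea is to split $\RR^d = \RR^{d-1}\times\RR$ and look for $E$ that fibers nicely over the last coordinate. More precisely, I would try to write $L$ and $M$ so that each contains a sublattice of the form $\Lambda\times\{0\}$ with $\Lambda\subseteq\RR^{d-1}$ a common lattice (this is where density is used: we can rotate/shear coordinates so that $L$ and $M$ share a dense subgroup in a hyperplane, and then further adjust), and so that modulo this hyperplane the ``heights'' of $L$ and $M$ form two subgroups of $\RR$ whose sum is dense in $\RR$.

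The second step is to build $E$ as a union over a finite index set of translated prisms $B_j \times I_j$, where each $B_j\subseteq\RR^{d-1}$ is a fundamental domain for the common hyperplane lattice $\Lambda$ (or a piece thereof) and each $I_j\subseteq\RR$ is an interval, arranged so that the projection to the last coordinate realizes a bounded common fundamental domain for the two dense height-subgroups of $\RR$. This is exactly the $1$-dimensional core of the problem: given two countable dense subgroups $G_1,G_2\subseteq\RR$ of ``the same density'' (here both cocompact modulo the respective quotient), find a bounded set in $\RR$ tiling by both — and this is precisely the type of statement furnished by \cite{G2024} (a bounded common fundamental domain / bounded remainder type construction for $\ZZ+\alpha\ZZ$-like situations). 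I would invoke that result to get a bounded $J\subseteq\RR$, a finite union of intervals, tiling $\RR$ under translations by both height-groups; then $E$ is assembled by taking, over the finitely many cosets of $\Lambda$ involved, the product of a fundamental domain of $\Lambda$ with the appropriate pieces of $J$, and checking that the $L$- and $M$-translation structure (lattice part in the hyperplane, height part transverse) makes $L\oplus E = M\oplus E = \RR^d$ genuine tilings. Boundedness is immediate since $B_j$ and $I_j$ are bounded and there are finitely many, and the polytope claim follows because everything in sight is a product of a polytope (fundamental parallelepiped of $\Lambda$) with an interval.

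The main obstacle is arranging the coordinate change so that $L$ and $M$ simultaneously contain a \emph{common} full-rank sublattice in a hyperplane while the transverse heights remain dense in $\RR$ — generic lattices $L,M$ with $\overline{L+M}=\RR^d$ need not literally share a hyperplane sublattice, so I expect one must instead pass to finite-index sublattices $L'\subseteq L$, $M'\subseteq M$ of equal volume that \emph{do} share such structure, build $E'$ for $L',M'$, and then repair it to an $E$ tiling with the full $L,M$ (a finite common fundamental domain of $L'$ in $L$ and of $M'$ in $M$ can be used to partition and reassemble, but one must check a single $E$ works for both simultaneously — this compatibility is the delicate point and is presumably where the hypothesis $\overline{L+M}=\RR^d$ does real work, ensuring the two ``repair'' operations can be made consistent). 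Once that reduction is in place, the rest is the $1$-dimensional input from \cite{G2024} plus bookkeeping; I would expect the write-up to spend most of its effort on the reduction and on verifying the tiling equations coset-by-coset.
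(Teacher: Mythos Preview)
Your proposal has a genuine gap at the reduction step, and it also misidentifies what \cite{G2024} supplies. The hoped-for common hyperplane sublattice need not exist, and passing to finite-index sublattices cannot create one. Take $d=2$, $L=\ZZ^2$, $M=R_\theta\ZZ^2$ with $\tan\theta$ transcendental. A line through the origin meets $\ZZ^2\setminus\Set{0}$ only if its slope is rational, and meets $R_\theta\ZZ^2\setminus\Set{0}$ only if its rotate by $-\theta$ has rational slope; the transcendence of $\tan\theta$ rules out both happening simultaneously, so no line carries a rank-one sublattice of both $L$ and $M$. Any finite-index $L'\subseteq L$ is still a sublattice of $\ZZ^2$ and meets exactly the same rational-slope lines, so the obstruction persists for all $L'\subseteq L$, $M'\subseteq M$. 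Moreover, even when a common $\Lambda\times\Set{0}$ does exist, the projected ``height'' groups in $\RR$ are genuine rank-one lattices $\beta_L\ZZ$ and $\beta_M\ZZ$, and $\vol L=\vol M$ together with the shared $\Lambda$ forces $\beta_L=\beta_M$; the one-dimensional problem you isolate is then trivial and cannot be where the content lies. The ``bounded common fundamental domain for two dense subgroups of $\RR$'' you attribute to \cite{G2024} is not a well-posed statement (a dense subgroup has no positive-measure fundamental domain), and it is not what that paper proves.

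The paper's route is entirely different and involves no dimension reduction. One builds an auxiliary lattice $\Gamma\subset\RR^d\times\RR^d$ whose second projection $p_2(\Gamma)$ equals $L+M$ (this is where $\overline{L+M}=\RR^d$ enters, to make the cut-and-project scheme legitimate), and takes as windows the fundamental parallelotopes $\Omega_L,\Omega_M\subset\RR^d$. By the Duneau--Oguey theorem, each resulting model set is at bounded distance from a lattice (parallelotope windows spanned by vectors of $p_2(\Gamma)$), and since $\Abs{\Omega_L}=\Abs{\Omega_M}$ the two model sets have equal density and are therefore at bounded distance from each other. The actual input from \cite{G2024} is a $d$-dimensional equidecomposability theorem: bounded distance equivalence of two regular model sets from the same $\Gamma$ forces their windows to be $p_2(\Gamma)$-equidecomposable up to measure zero. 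Thus $\Omega_L$ admits a polytopal partition $\Set{S_i}$ and vectors $\ell_i+m_i\in L+M$ with $\Omega_M=\bigcup_i(S_i+\ell_i+m_i)$. Setting $E=\bigcup_i(S_i+\ell_i)=\bigcup_i(S_i'-m_i)$ exhibits $E$ simultaneously as an $L$-rearrangement of $\Omega_L$ and an $M$-rearrangement of $\Omega_M$, hence as a bounded common fundamental domain that is a finite union of polytopes.
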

We again stress that when we say that the direct-sum equality $L \oplus E = \RR^d$ or $M \oplus E = \RR^d$ is a tiling, we mean that this holds, except possibly on a set of measure zero.

\subsection{An application to Weyl--Heisenberg orthogonal bases}\label{ss:gabor}

In \cite{han2001lattice} the existence of a measurable common fundamental domain for two lattices is used to show that whenever $K, L$ are two lattices in $\RR^d$ with $\det L \cdot \det K = 1$ then there exists a Gabor (or Weyl-Heisenberg) orthogonal basis of $\RR^d$ with translation lattice $L$ and modulation lattice $K$. In other words, there exists a function $g \in L^2(\RR^d)$ such that the collection of time-frequency translates
$$
e^{2\pi i \ell \cdot x} g(x-k),\ \ \ell \in L, k \in K,
$$
is an orthogonal basis of $L^2(\RR^d)$. In their proof the function $g$ is precisely the indicator function of a measurable common fundamental domain of the lattices $K$ and $L^*$. Thus our Theorem \ref{th:main} implies that this \textit{window} function $g$ may be chosen to be of compact support, a possibly significant property, since it offers the advantage of localization.

\subsection{Some notation}\label{ss:notation}

A \textit{lattice} is a discrete subgroup of $\RR^n$ which linearly spans $\RR^n$. The \textit{rank} of a subgroup of $\RR^n$ is the dimension of its linear span. Thus a lattice is a discrete subgroup of $\RR^n$ of full rank, equal to $n$. We denote by $\vol L$ or $\det L$ the volume of any fundamental domain of the lattice $L$, and by $\dens L$ the lattice density $1/\vol L$. If $L$ is a discrete subgroup of $\RR^d$ of rank smaller than $d$ we still write $\vol L$ or $\det L$ to denote the volume of the fundamental domain in the $\RR$-linear space $L$ spans.

Any lattice $L \subseteq \RR^n$ is equal to $A\ZZ^n$ where $A$ is a non-singular $n\times n$ matrix. This matrix $A$ is not unique, but can be formed by taking as its columns any $\ZZ$-basis of $L$.
The \textit{dual lattice} of $L$ is defined by
$$
L^* = \Set{x\in\RR^n: x\cdot \ell \in \ZZ \text{ for all } \ell \in L}
$$
and it can be seen that $L^* = A^{-\top}\ZZ^n$.

When we write $A \oplus B$ for two sets $A, B$ in an additive group we mean that all sums $a+b$, with $a \in A, b \in B$, are distinct. In this case we say the sum $A+B$ is \textit{direct} or that $A+B$ is a \textit{tiling}.

\noindent{\bf Plan.} We prove Theorem \ref{th:dense} first in \S \ref{s:dense-sum} and use it to prove then Theorem \ref{th:main} in \S \ref{s:general}.

\section{Bounded common fundamental domains when the sum is dense}\label{s:dense-sum}

The proof of Theorem \ref{th:dense} relies on certain results from the theory of so-called cut-and-project sets in $\RR^d$. We therefore give a brief description of this point set construction, introducing necessary notation and terminology.

A discrete point set $\Lambda$ in $\RR^d$ is called a \emph{Delone set} if it is both uniformly discrete and relatively dense, meaning there exist radii $r,R>0$ such that any ball of radius $r$ contains at most one point of $\Lambda$, and any ball of radius $R$ contains at least one point of $\Lambda$. If $\Lambda$ additionally satisfies
\begin{equation*}
    \Lambda - \Lambda \subseteq \Lambda + F,
\end{equation*}
for some finite set $F$ in $\RR^d$, then we say that $\Lambda$ is a \emph{Meyer set}.

A cut-and-project set, or \emph{model set}, is constructed from a lattice $\Gamma \subset \mathbb{R}^m \times \mathbb{R}^n$ and a \emph{window set} $W\subset \mathbb{R}^n$ by taking the projection into $\mathbb{R}^m$ of those lattice points whose projection into $\mathbb{R}^n$ is contained in $W$. Denoting the projections from $\mathbb{R}^m \times \mathbb{R}^n$ onto $\mathbb{R}^m$ and $\mathbb{R}^n$ by $p_1$ and $p_2$, respectively, we assume that $p_1 |_{\Gamma}$ is injective, and that the image $p_2(\Gamma)$ is dense in $\mathbb{R}^n$, and denote by $\Lambda_W=\Lambda(\Gamma,W)$ the model set
\begin{equation*}
\Lambda (\Gamma, W) = \left\{ p_1(\gamma) \, : \, \gamma \in \Gamma , \, p_2(\gamma) \in W \right\} .
\end{equation*}

If the boundary $\partial W$ of the window $W$ has Lebesgue measure zero, then the model set $\Lambda_W$ is called \emph{regular}. In this case, the point set $\Lambda_W$ in $\mathbb{R}^m$ has a number of desirable properties. One can show that $\Lambda_W$ is a Meyer set with well-defined density 
\begin{equation*}
    \dens \Lambda_W = \frac{|W|}{\det \Gamma} = \Abs{W} \cdot \dens\Gamma.
\end{equation*}
Moreover, if the model set is either generic (meaning that $p_2(\Gamma) \cap \partial W = \emptyset$) or if the window $W$ is half-open as defined in \cite[Definition 2.2]{P2000}, then $\Lambda_W$ is \emph{repetitive}. Repetitivity is the crystal-like quality that every finite configuration appearing in $\Lambda$ will reappear infinitely often, see e.g.\ \cite[Property 2]{P2000} for a precise definition.

The cut-and-project construction is well-studied in the field of aperiodic order, and in the last 30 years there have been several results on when a model set (or more generally a Delone set) is at bounded distance from a lattice \cite{DO1991, FG2018, Lac1992}. We say that two point sets $\Lambda$ and $\Lambda'$ in $\mathbb{R}^n$ are \textit{bounded distance equivalent} (or, at bounded distance from each other) if there exists a bijection $\varphi \, : \, \Lambda \to \Lambda'$ and a constant $C>0$ such that
\[\|\varphi (\lambda)-\lambda \| < C\]
for all $\lambda \in \Lambda$.

\noindent{\bf Facts:}
\begin{enumerate}
\item Bounded distance equivalence is an equivalence relation.
\item If a Delone set $\Lambda$ in $\mathbb{R}^d$ has a well-defined density and is bounded distance equivalent to a lattice $L$ in $\mathbb{R}^d$, then $\dens  \Lambda = \dens L$.
\item Any two lattices $L$ and $M$ in $\mathbb{R}^d$ of equal density are necessarily {at bounded distance from each other} (\cite[Theorem 5.2]{DO1990}, \cite[Theorem 1]{DO1991}, \cite[\S 3.2]{kolountzakis1997multi}).
\end{enumerate}

The proof of Theorem \ref{th:dense} relies on the following result from \cite{DO1991} on model sets with parallelotope windows, as well as a more recent result from \cite{EGKL2025} connecting bounded distance equivalence and equidecomposability (Theorem \ref{thm:equidecomp} below).
\begin{theorem}{\cite[Theorem 3.1]{DO1991}}\label{thm:do}
Let $\Gamma$ be a lattice in $\mathbb{R}^m \times \mathbb{R}^n$. If $W \subset \mathbb{R}^n$ is a parallelotope
\[W= \left\{ \sum_{k=1}^n t_k v_k \, : \, 0 \leq t_k < 1 \right\}\]
spanned by $n$ linearly independent vectors in $p_2(\Gamma)$, then the model set $\Lambda(\Gamma, W)$ is at bounded distance to a lattice in $\mathbb{R}^m$.
\end{theorem}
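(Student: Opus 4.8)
\emph{Set-up.} The idea is to write $\Lambda_W$ as the image of a map $\Gamma\to\RR^m$ that fails to be a homomorphism only by a \emph{bounded} amount, and then to replace it by a genuine homomorphism onto a lattice without moving the points more than a bounded distance. Because $W$ is the half-open parallelotope on linearly independent $v_1,\dots,v_n\in p_2(\Gamma)$, the group $\Lambda_V:=\ZZ v_1\oplus\cdots\oplus\ZZ v_n$ is a full-rank lattice in $\RR^n$ with fundamental domain $W$; hence every $y\in\RR^n$ has a unique decomposition $y=\lfloor y\rfloor+\{y\}$ with $\lfloor y\rfloor\in\Lambda_V$ and $\{y\}\in W$, and $\lfloor y+\lambda\rfloor=\lfloor y\rfloor+\lambda$ for $\lambda\in\Lambda_V$. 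Choose $\gamma_k\in\Gamma$ with $p_2(\gamma_k)=v_k$, put $w_k:=p_1(\gamma_k)$, set $\Gamma':=\ZZ\gamma_1\oplus\cdots\oplus\ZZ\gamma_n\subseteq\Gamma$ (free of rank $n$, since $p_2$ carries it isomorphically onto $\Lambda_V$), and let $h\colon\Lambda_V\to\RR^m$ be the group homomorphism with $h(v_k)=w_k$. I would then work with the renormalized projection
\[
q\colon\Gamma\longrightarrow\RR^m,\qquad q(\gamma):=p_1(\gamma)-h\bigl(\lfloor p_2(\gamma)\rfloor\bigr).
\]

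\emph{The two key properties of $q$.} First, $q$ is constant on $\Gamma'$-cosets (from $\lfloor p_2(\gamma)+v_k\rfloor=\lfloor p_2(\gamma)\rfloor+v_k$), and the induced map $\bar q\colon G:=\Gamma/\Gamma'\to\RR^m$ is a \emph{bijection onto $\Lambda_W$}. Indeed, $S:=\Set{\gamma\in\Gamma:p_2(\gamma)\in W}$ is a complete set of coset representatives for $\Gamma'$ in $\Gamma$ (every $\gamma$ can be pushed into $S$ by subtracting the appropriate integer combination of the $\gamma_k$, and two points of $W$ differing by an element of $\Lambda_V$ coincide); on $S$ one has $q=p_1$, which is injective there because $p_1|_\Gamma$ is; and $p_1(S)=\Lambda_W$ by the very definition of the model set. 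Second, $q$ is a \emph{quasimorphism}: expanding the definition gives
\[
q(\gamma+\delta)-q(\gamma)-q(\delta)\;=\;-\,h\bigl(\lfloor\,\{p_2(\gamma)\}+\{p_2(\delta)\}\,\rfloor\bigr),
\]
and since $\{p_2(\gamma)\}+\{p_2(\delta)\}$ always lies in the bounded set $W+W$, the floor on the right ranges over the finite set $\Lambda_V\cap\bigl((W+W)-W\bigr)$, so the defect is bounded in norm by a constant $C_0$. This ``bounded carries'' phenomenon, which uses the parallelotope shape of $W$ together with $v_k\in p_2(\Gamma)$, is where I expect the only real work to lie; everything afterwards is soft.

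\emph{Correcting $q$ to a homomorphism and recognizing a lattice.} I would next invoke the Hyers--Ulam stability of $\RR^m$-valued quasimorphisms on an abelian group: the limit $\phi(\gamma):=\lim_{k\to\infty}2^{-k}q(2^k\gamma)$ exists (the successive differences are $\le 2^{-k}C_0$ in norm), is a homomorphism $\Gamma\to\RR^m$, and satisfies $\Norm{q(\gamma)-\phi(\gamma)}\le C_0$ for all $\gamma$. As $q$ vanishes on $\Gamma'$, the homomorphism $\phi$ is bounded on $\Gamma'$, hence zero there, so it descends to a homomorphism $\psi\colon G\to\RR^m$ with $\Norm{\bar q(g)-\psi(g)}\le C_0$ for all $g\in G$. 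Now $G$ is finitely generated of rank $m$ (because $\Gamma\cong\ZZ^{m+n}$ and $\Gamma'$ has rank $n$), so $G\cong\ZZ^m\oplus T$ with $T$ finite, and $\psi$ kills $T$; hence $L_0:=\psi(G)=\phi(\Gamma)$ is generated by at most $m$ elements. Moreover $L_0$ spans $\RR^m$: otherwise $\Lambda_W=\bar q(G)$ would lie within distance $C_0$ of a proper affine subspace, contradicting the relative density of $\Lambda_W$ (a parallelotope window is regular, so $\Lambda_W$ is a Delone set). A subgroup of $\RR^m$ that spans $\RR^m$ and is generated by at most $m$ elements is a full-rank lattice, so $L_0$ is a lattice in $\RR^m$.

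\emph{From bounded distance to an honest bijection.} If $G$ is torsion-free, then $\psi\colon G\to L_0$ is an isomorphism and $\varphi:=\psi\circ\bar q^{-1}\colon\Lambda_W\to L_0$ is a bijection with $\Norm{\varphi(\lambda)-\lambda}\le C_0$, finishing the proof with $L=L_0$. Otherwise, with $|T|=t$, pick any lattice $L\supseteq L_0$ of index $t$, fix representatives $r_1,\dots,r_t$ for the cosets of $L_0$ in $L$, enumerate $T=\Set{\theta_1,\dots,\theta_t}$, and define a bijection $F\colon G\cong\ZZ^m\oplus T\to L$ by $F(x,\theta_i):=\psi(x)+r_i$ (legitimate since $\psi|_{\ZZ^m}$ is an isomorphism onto $L_0$ and $L=\bigsqcup_i(L_0+r_i)$); then $\Norm{F(g)-\psi(g)}\le\max_i\Norm{r_i}$, so $\varphi:=F\circ\bar q^{-1}\colon\Lambda_W\to L$ is a bijection displacing each point by at most $C_0+\max_i\Norm{r_i}$. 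Either way $\Lambda_W$ is at bounded distance from a lattice in $\RR^m$, as claimed. In summary, the one step that should require a genuine idea is the quasimorphism identity for $q$ and the boundedness of its defect; the stability argument and the final lattice bookkeeping are routine.
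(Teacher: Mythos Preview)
The paper does not prove this statement; it is quoted from \cite{DO1991} and used as a black box in the proof of Theorem~\ref{th:dense}. So there is no ``paper's own proof'' to compare against, but your argument is correct and is, in spirit, a repackaging of the Duneau--Oguey idea: lift the window basis $v_k\in p_2(\Gamma)$ to $\gamma_k\in\Gamma$, and compare the model-set projection with the linear projection along $\Span\{\gamma_1,\dots,\gamma_n\}$.

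One simplification is worth noting, and it brings your proof even closer to the original. The Hyers--Ulam step is unnecessary: extend $h$ $\RR$-linearly to $H\colon\RR^n\to\RR^m$ and set $\phi(\gamma):=p_1(\gamma)-H(p_2(\gamma))$ directly. This is already a homomorphism $\Gamma\to\RR^m$ vanishing on $\Gamma'$, and
\[
q(\gamma)-\phi(\gamma)=H\bigl(p_2(\gamma)-\lfloor p_2(\gamma)\rfloor\bigr)=H\bigl(\{p_2(\gamma)\}\bigr)\in H(W),
\]
which is bounded since $W$ is. (This $\phi$ coincides with your limit $\lim_k 2^{-k}q(2^k\gamma)$, so nothing changes downstream.) Your quasimorphism formulation is a pleasant abstraction of the same mechanism---``bounded carries'' when reducing $p_2(\gamma)$ into the fundamental parallelotope $W$---but the homomorphism can simply be written down. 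The remaining bookkeeping (that $G=\Gamma/\Gamma'$ has rank $m$, that $\psi(G)$ spans $\RR^m$ by the relative density of $\Lambda_W$, and the torsion adjustment via a superlattice of index $\Abs{T}$) is handled correctly.
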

We say that two sets $S$ and $S'$ in $\mathbb{R}^m$ are \emph{equidecomposable} if $S$ can be partitioned into finitely many subsets which can be rearranged by translations to form a partition of $S'$. Given a subgroup $G \subset \mathbb{R}^m$ we will use the term \emph{$G$-equidecomposable} to mean that we allow only translations in $G$ for this rearrangement.

Theorem \ref{thm:do} above can be extended to hold for any reasonably well-behaved fundamental domain of a sublattice in $p_2(\Gamma)$ by the following result of Frettl\"{o}h and Garber.
\begin{theorem}{\cite[Theorem 6.1]{FG2018}}\label{thm:fg}
Let $\Lambda$ and $\Lambda'$ be two model sets constructed from the same lattice $\Gamma$ but with different windows $W$ and $W'$, respectively. If the windows $W$ and $W'$ are $p_2(\Gamma)$-equidecomposable, then $\Lambda$ and $\Lambda'$ are bounded distance equivalent. 
\end{theorem}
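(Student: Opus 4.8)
The plan is to construct the required bounded-displacement bijection $\varphi\colon\Lambda\to\Lambda'$ directly from the equidecomposition, moving one piece at a time. Write the $p_2(\Gamma)$-equidecomposition of the two windows as $W=\bigsqcup_{i=1}^{k}A_i$ and $W'=\bigsqcup_{i=1}^{k}(A_i+w_i)$, where the sets $A_i$ are pairwise disjoint with union $W$, the translated sets $A_i+w_i$ are pairwise disjoint with union $W'$, and every $w_i\in p_2(\Gamma)$. For each $i$ I would fix, once and for all, a lattice vector $\gamma_i\in\Gamma$ with $p_2(\gamma_i)=w_i$; such a $\gamma_i$ exists precisely because $w_i\in p_2(\Gamma)$, and if there are several, any one of them will serve.

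The first step is to note that, because $p_1|_\Gamma$ is injective, the partition of the window lifts to a partition of the model set: with $\Lambda_i:=\Lambda(\Gamma,A_i)$ and $\Lambda'_i:=\Lambda(\Gamma,A_i+w_i)$ one gets $\Lambda=\bigsqcup_{i=1}^{k}\Lambda_i$ and $\Lambda'=\bigsqcup_{i=1}^{k}\Lambda'_i$, both unions disjoint. Indeed, a point $x\in\Lambda$ equals $p_1(\gamma)$ for a \emph{unique} $\gamma\in\Gamma$, and this $\gamma$ has $p_2(\gamma)$ lying in exactly one of the pieces $A_i$, which determines the unique block of the union containing $x$.

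The second step is the observation that translation by $\gamma_i$ identifies $\Lambda_i$ with $\Lambda'_i$ up to the fixed shift $p_1(\gamma_i)$. This is immediate: the map $\gamma\mapsto\gamma+\gamma_i$ is a bijection of $\Gamma$ onto itself carrying $\{\gamma\in\Gamma:p_2(\gamma)\in A_i\}$ onto $\{\gamma\in\Gamma:p_2(\gamma)\in A_i+w_i\}$ since $p_2(\gamma+\gamma_i)=p_2(\gamma)+w_i$, and post-composing with the injection $p_1|_\Gamma$ turns it into a bijection $\Lambda_i\to\Lambda'_i$ of the form $x\mapsto x+p_1(\gamma_i)$. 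Gluing these $k$ bijections — valid because source and target are disjoint unions over the same index set — gives a bijection $\varphi\colon\Lambda\to\Lambda'$ with $\varphi(x)=x+p_1(\gamma_i)$ for $x\in\Lambda_i$, and therefore $\|\varphi(x)-x\|\le C:=\max_{1\le i\le k}\|p_1(\gamma_i)\|<\infty$ for all $x\in\Lambda$. That is exactly bounded distance equivalence.

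I do not anticipate a real obstacle; the one point demanding care is the lifting of the window partition in the first step, which genuinely relies on injectivity of $p_1$ on $\Gamma$ — without it a single point of $\Lambda$ could arise from two lattice points whose $p_2$-images sit in different pieces $A_i$, and the piecewise definition of $\varphi$ would be ill-posed. It is also worth recording that nothing else from the model-set framework is used: density of $p_2(\Gamma)$ in $\RR^n$, the measure-zero boundary condition, and any regularity of the pieces $A_i$ are all irrelevant here, the argument being purely combinatorial in $\Gamma$ and the translations $w_i$.
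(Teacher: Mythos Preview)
Your argument is correct. The paper does not supply its own proof of this statement; it is quoted verbatim as \cite[Theorem 6.1]{FG2018} and used as a black box, so there is nothing in the present paper to compare your proof against. That said, the proof you give is the natural and standard one, and is in essence the argument in the cited source: lift the finite window partition through the injective projection $p_1|_\Gamma$, translate each piece by a fixed preimage $\gamma_i$ of $w_i$, and observe that the resulting bijection moves every point by at most $\max_i\|p_1(\gamma_i)\|$. Your remark that only the injectivity of $p_1|_\Gamma$ is needed (not density of $p_2(\Gamma)$ or any regularity of the pieces) is also accurate.
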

It turns out that for regular model sets, a converse of Theorem \ref{thm:fg} can be established if we relax the equidecomposability condition to ignore sets of measure zero.
\begin{definition} Let $G$ be a group of translations in $\mathbb{R}^n$. We say that two measurable sets $S$ and $S'$ in $\mathbb{R}^n$ of equal Lebesgue measure are \emph{$G$-equidecomposable up to measure zero} if there exists a partition of $S$ into finitely many measurable subsets $S_1, \ldots , S_N$, and a set of vectors $g_1, \ldots , g_N \in G$, such that $S'$ and $\bigcup_{j=1}^N (S_j+g_j)$ differ at most on a set of measure zero.
\end{definition}
\begin{theorem}{\cite[Theorem 4.1]{EGKL2025}}\label{thm:equidecomp}
Let $\Gamma \subset \mathbb{R}^m \times \mathbb{R}^n$ be a lattice and let $W$ and $W'$ be bounded, measurable sets in $\mathbb{R}^n$ {where both $\partial W$ and $\partial W'$ have measure zero and $|W|=|W'|$}. If the model sets $\Lambda_W = \Lambda(\Gamma, W)$ and $\Lambda_{W'}=\Lambda(\Gamma, W')$ are bounded distance equivalent, then the windows $W$ and $W'$ are $p_2(\Gamma)$-equidecomposable up to measure zero.
\end{theorem}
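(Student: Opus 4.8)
The plan is to turn the bounded–distance bijection between the two model sets into a \emph{finite measurable} rearrangement of their windows: first I would produce such a rearrangement at the level of the dense point sets $p_2(\Gamma)\cap W$ and $p_2(\Gamma)\cap W'$, and then ``thicken'' it to a rearrangement of $W$ and $W'$ themselves using equidistribution together with a measurable matching theorem. To reduce to finitely many translations, let $\varphi:\Lambda_W\to\Lambda_{W'}$ be a bijection with $\Norm{\varphi(\lambda)-\lambda}<C$ for all $\lambda$. Using injectivity of $p_1|_\Gamma$, write $\lambda=p_1(\gamma_\lambda)$ and $\varphi(\lambda)=p_1(\gamma'_\lambda)$ with $\gamma_\lambda,\gamma'_\lambda\in\Gamma$, and put $\delta(\lambda):=\gamma'_\lambda-\gamma_\lambda\in\Gamma$. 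Then $p_1(\delta(\lambda))=\varphi(\lambda)-\lambda$ has norm $<C$, while $p_2(\delta(\lambda))\in W'-W$ is bounded; since $\Gamma$ is discrete, only finitely many $\delta\in\Gamma$ satisfy both constraints, so $\delta(\lambda)$ takes finitely many values $\delta_1,\dots,\delta_N$. Set $t_j:=p_2(\delta_j)\in p_2(\Gamma)$ and let $V_j$ be the set of internal coordinates $\lambda^\star:=p_2(\gamma_\lambda)$ of those $\lambda$ with $\delta(\lambda)=\delta_j$. Because $\varphi$ is a bijection and $\varphi(\lambda)^\star=\lambda^\star+t_j$ on the $j$-th class, the $V_j$ partition $p_2(\Gamma)\cap W$, the translates $V_j+t_j$ are pairwise disjoint, and $\bigcup_j(V_j+t_j)=p_2(\Gamma)\cap W'$. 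Thus the two dense sets are equidecomposed by the finitely many translations $t_1,\dots,t_N\in p_2(\Gamma)$.

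To promote this to the windows, consider the bipartite ``translation graph'' joining $x\in W$ to $x+t_j\in W'$ whenever $x+t_j\in W'$. I would verify the (two-sided) Hall condition for Lebesgue measure: $\Abs{S}\le\Abs{\bigcup_j(S+t_j)\cap W'}$ for every measurable $S\subseteq W$, and symmetrically for $W'$. It is enough to verify this when $\partial S$ is null (and then approximate a general measurable $S$). For such $S$ the model set $\Lambda(\Gamma,S)$ is regular, hence has density $\Abs{S}\dens\Gamma$; and $\varphi$ restricts to an \emph{injection} of $\Lambda(\Gamma,S)=\{\lambda\in\Lambda_W:\lambda^\star\in S\}$ into $\Lambda(\Gamma,\Sigma)$ with $\Sigma:=\bigcup_j(S+t_j)\cap W'$ (also a null–boundary window), since $\varphi$ shifts internal coordinates by one of the $t_j$. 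Comparing densities — the bounded displacement of $\varphi$ not affecting them, as densities are limits over growing balls — gives $\Abs{S}\dens\Gamma\le\Abs{\Sigma}\dens\Gamma$, i.e.\ $\Abs{S}\le\Abs{\Sigma}$. The reverse inequality follows by applying the same argument to $\varphi^{-1}$, whose lifted displacements lie among $-\delta_1,\dots,-\delta_N$, together with $\Abs{W}=\Abs{W'}$.

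Finally I would invoke a measurable form of Hall's marriage theorem for these bounded–degree translation graphs: from the two-sided Hall condition one obtains a partition $W=\bigsqcup_{j=1}^N W_j$ into measurable sets such that the translates $W_j+t_j$ are pairwise disjoint up to measure zero and $\bigcup_j(W_j+t_j)=W'$ up to measure zero — for instance by a max-flow/min-cut argument in which the Hall inequalities guarantee a flow of the maximal value, which then decomposes into the pieces $W_j$. Since each $t_j\in p_2(\Gamma)$, this is precisely a $p_2(\Gamma)$-equidecomposition of $W$ and $W'$ up to measure zero, as required.

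The main obstacle I expect is exactly this last step, together with the equidistribution bookkeeping of the previous one (controlling boundary/edge effects for balls, and approximating arbitrary measurable $S$ by null–boundary sets): the passage from a rearrangement of the \emph{point sets} $p_2(\Gamma)\cap W$ and $p_2(\Gamma)\cap W'$ to a genuine \emph{measurable} rearrangement of $W$ and $W'$ into \emph{finitely many} pieces — rather than merely a measure-preserving bijection — is the delicate part, and it is here that the finiteness produced in the first step and the density of $p_2(\Gamma)$ are essential.
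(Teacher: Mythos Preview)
This theorem is not proved in the present paper --- it is quoted from \cite{G2024} --- and the only information given here about its proof is the Remark following the statement: in \cite{G2024} the partition of $W$ is obtained by shifting $W$ by certain elements of $p_2(\Gamma)$ and successively removing the intersection of (what remains of) the shifted $W$ with $W'$.

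Your opening move is correct and surely coincides with that of \cite{G2024}: lifting the bounded-displacement bijection $\varphi$ back to $\Gamma$ via the injective $p_1$, the differences $\delta(\lambda)=\gamma'_\lambda-\gamma_\lambda$ lie in a bounded region of $\RR^m\times\RR^n$ (first coordinate bounded by $C$, second lying in $W'-W$), hence by discreteness of $\Gamma$ take only finitely many values $\delta_1,\dots,\delta_N$. This yields the finite list of translations $t_j=p_2(\delta_j)\in p_2(\Gamma)$ and the partition of $p_2(\Gamma)\cap W$ into the classes $V_j$.

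The divergence is in how one passes from the point sets to the windows. You set up a translation graph, verify the two-sided Hall inequality by comparing densities of regular model sets (this step is sound: the restriction of $\varphi$ is indeed a bounded-displacement injection $\Lambda(\Gamma,S)\hookrightarrow\Lambda(\Gamma,\Sigma)$, and the approximation of arbitrary measurable $S$ from inside by null-boundary sets is routine), and then appeal to a measurable marriage\,/\,max-flow theorem. That last appeal is the genuine gap you yourself flag: a measurable Hall theorem of this type is a result of independent substance, not a triviality to be waved through. The approach described in the Remark sidesteps it with a direct greedy construction --- run through the $t_j$, peel off from $W$ whatever currently lands in $W'$ --- with a density\,/\,counting argument of the same flavour as yours used only to certify that nothing of positive measure remains at the end. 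A concrete bonus of that greedy route, and the reason the paper records it, is that when $W$ and $W'$ are polytopes the pieces $S_j$ are automatically polytopes as well; this is exactly what is needed downstream in the proof of Theorem~\ref{th:dense}, whereas your matching-theorem route would a priori produce only measurable pieces.
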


We are now equipped to prove Theorem \ref{th:dense}.
\begin{proof}[Proof of Theorem \ref{th:dense}]
By abuse of notation let $L=L\mathbb{Z}^d$ and $M=M\mathbb{Z}^d$, where $L$ and $M$ are $d\times d$ non-singular matrices. Let $\Omega_L$ be the half-open parallelotope spanned by the columns of $L$ and $\Omega_M$ be the half-open parallelotope spanned by the columns of $M$. Then $\Omega_L$ and $\Omega_M$ are fundamental domains of the lattices $L$ and $M$, respectively. Since $L$ and $M$ are assumed to have equal volumes, we have $|\Omega_M| = |\Omega_L|$.

We now construct a lattice $\Gamma \subset \mathbb{R}^d \times \mathbb{R}^d$ (where $\Gamma$ again denotes both the lattice itself and its matrix representation) by letting
\begin{equation*}
\Gamma = 
\left(
\begin{array}{c c c c c}
\\
& & K & & \\
\\
 \hline
 \\
& L &  & M & \\ 
 \\
\end{array}
\right),
\end{equation*}
where $K$ is chosen to be a $d \times 2d$ matrix which acts as an injective map on $\mathbb{Z}^{2d}$. With the cut-and-projection construction in mind, note that this ensures that the projection $p_1$ is injective when restricted to the lattice $\Gamma$. This requirement for $K = [K_1 | K_2]$ is a generic condition on $K$ (say, the matrices $K$ failing it have measure $0$) hence the matrix $K_1$ can also be assumed to be nonsingular, in which case we can apply the block-determinant formula and obtain
\begin{equation}\label{block-det}
\det\Gamma = \det K_1 \det(M-L K_1^{-1} K_2).
\end{equation}
In order to guarantee the non-singularity of $\Gamma$ it is enough to pick the matrix $K_2$ to have very small elements. Since $M$ is nonsingular and the set of nonsingular matrices is open it follows that the second factor on the right-hand side of  \eqref{block-det} is also non-zero and thus $\Gamma$ is a proper lattice in $\RR^{2d}$. Finally, since $\overline{L+M}=\mathbb{R}^d$ by assumption, we know that $p_2(\Gamma)$ is dense in $\mathbb{R}^d$.

We now consider the two model sets 
\begin{equation*}
\Lambda_L= \Lambda(\Gamma, \Omega_L) = \left\{ p_1(\gamma) \, : \, \gamma \in \Gamma , p_2(\gamma) \in \Omega_L \right\}
\end{equation*}
and
\begin{equation*}
\Lambda_M = \Lambda(\Gamma, \Omega_M) = \left\{ p_1(\gamma) \, : \, \gamma \in \Gamma , p_2(\gamma) \in \Omega_M \right\}.
\end{equation*}
Since $p_2(\Gamma) = L + M$, we see that both $\Omega_L$ and $\Omega_M$ are windows spanned by $d$ linearly independent vectors in $p_2(\Gamma)$. Thus by Theorem \ref{thm:do}, both $\Lambda_L$ and $\Lambda_M$ are bounded distance equivalent to a lattice. Moreover, by assumption we have $|\Omega_L|=|\Omega_M|$, so $\dens \Lambda_L= \dens \Lambda_M$. This implies that the model sets $\Lambda_L$ and $\Lambda_M$ are bounded distance equivalent to lattices of equal density, and thus also {at bounded distance from each other}. We thus conclude from Theorem \ref{thm:equidecomp} that we can find a partition of $\Omega_L$ into subsets $S_1, \ldots , S_N$ and elements $\gamma_1 , \ldots , \gamma_N \in \Gamma$ such that 
\begin{equation}\label{eq:fdm}
\Omega_M = \bigcup_{i=1}^N \underbrace{\left(S_i + p_2(\gamma_i) \right)}_{S_i'} = \bigcup_{i=1}^N S_i',
\end{equation}
where we understand this equality to hold up to measure zero.

Finally, we observe that 
\begin{equation*}
p_2(\gamma_i) = \ell_i + m_i, 
\end{equation*}
for every $i=1, \ldots , N$, where $\ell_i \in L$ and $m_i \in M$. It follows that
\begin{equation*}
E=\bigcup_{i=1}^N (S_i' - m_i) = \bigcup_{i=1}^N (S_i + \ell_i)
\end{equation*}
is a fundamental domain for both $M$ and $L$ by \eqref{eq:fdm} and the fact that $(S_i)_{i=1}^N$ is a partition of $\Omega_L$. We thus have
\begin{equation*}
L \oplus E = M \oplus E = \mathbb{R}^d,
\end{equation*}
for a bounded measurable set $E \subset \RR^d$.
\end{proof}

\section{Bounded common fundamental domains in the general case}\label{s:general}

In this section we prove Theorem \ref{th:main}.

\begin{lemma}\label{lm:ranks}
Suppose $L \subseteq \ztrn$ is a lattice in $\RR^d$, where $d=m+n$. Then
$$
L_2 = L \cap \trn
$$
has rank $n$.
\end{lemma}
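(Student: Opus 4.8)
The plan is to study the restriction to $L$ of the coordinate projection $p_1\colon\RR^d=\RR^m\times\RR^n\to\RR^m$, and to use that for a discrete subgroup of a Euclidean space its rank (the dimension of its real linear span, as in \S\ref{ss:notation}) agrees with its rank as a free abelian group.

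First I would note that $p_1(L)\subseteq\ZZ^m$, so $p_1(L)$ is a discrete subgroup of $\RR^m$ and hence a free abelian group of some rank $k\le m$. The kernel of the group homomorphism $p_1|_L\colon L\to\ZZ^m$ is exactly $\Set{x\in L:p_1(x)=0}=L\cap\trn=L_2$, so $L/L_2\cong p_1(L)$. All three groups $L$, $L_2$, $p_1(L)$ are torsion-free, being subgroups of $\RR^d$, so tensoring the short exact sequence $0\to L_2\to L\to p_1(L)\to 0$ with $\QQ$ gives $\rank L=\rank L_2+\rank p_1(L)=\rank L_2+k$. Since $L$ is a lattice in $\RR^d$ we have $\rank L=d=m+n$, whence $\rank L_2=d-k\ge d-m=n$.

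For the reverse inequality, $L_2\subseteq\trn$, a linear subspace of dimension $n$, so the real span of $L_2$ has dimension at most $n$; and since $L_2$ is discrete, being a subgroup of the discrete set $L$, this dimension is exactly $\rank L_2$. Thus $\rank L_2\le n$, and combining the two bounds yields $\rank L_2=n$.

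The argument is short, and the only point needing care is the bookkeeping between the two meanings of ``rank'': the identification is legitimate precisely because $L$, $L_2$ and $p_1(L)$ are all discrete. I do not expect a genuine obstacle here — the content is simply that passing from $L$ to its image under the integer-coordinate projection can collapse at most an $m$-dimensional worth of directions, so the complementary $n$ directions must already lie inside $L_2$.
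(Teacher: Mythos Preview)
Your proof is correct and is essentially the same argument as the paper's: both exploit that the projection of $L$ onto the first $m$ coordinates lands in $\ZZ^m$ and hence has rank at most $m$, forcing the kernel $L_2$ to have rank at least $n$. The paper phrases this concretely via a basis extension of $L_2$ to $L$ followed by a linear-dependence argument among the projected vectors, whereas you package the same count as rank additivity along the short exact sequence $0\to L_2\to L\to p_1(L)\to 0$.
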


\begin{proof}
Suppose $\rank L_2 = k < n$ and let $u_1, \ldots, u_k \in \trn$ be a $\ZZ$-basis of $L_2$. Let also $u_{k+1},\ldots,u_d$ be an extension of this $\ZZ$-basis to a $\ZZ$-basis of $L$. This extension always exists \cite[Corollary 3, p.\ 14]{cassels1996introduction}.

It follows that there are $g_j \in \ZZ^m$ and $r_j \in \trn$, for $j=1,\ldots,d-k$, such that
$$
u_{k+j} = g_j + r_j,\ \ j=1,\ldots d-k.
$$
Since $m<d-k$ there are $n_j \in \ZZ$, not all $0$, such that $\sum_{j=1}^{d-k} n_j  g_j = 0$. This implies that $0 \neq \sum_{j=1}^{d-k} n_j u_{k+j} \in \trn$, hence this sum belongs to $L_2$, a contradiction, since $u_1,\ldots,u_d$ are linearly independent and $L_2$ is generated by $u_1, \ldots, u_k$.

\end{proof}

\begin{lemma}{\cite[Theorem 4.3]{ore1958}}\label{lm:two}
Suppose $G_1, G_2$ are subgroups of the group $G$ of the same, finite index $k$. Then there are $g_1, \ldots, g_k \in G$ which are simultaneously a complete set of coset representatives of $G_1$ and $G_2$ in $G$. In other words
$$
G_1 + \Set{g_1, \ldots, g_k} = G_2 + \Set{g_1, \ldots, g_k} = G
$$
are both tilings. 
\end{lemma}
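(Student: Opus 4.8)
The plan is to reformulate the statement as a perfect–matching problem and apply Hall's marriage theorem. Write $H = G_1 \cap G_2$. The first step is index bookkeeping: the map $g + H \mapsto (g+G_1,\, g+G_2)$ is well defined and injective, so $[G:H] \le [G:G_1]\cdot[G:G_2] = k^2 < \infty$; combining this with the multiplicativity of the index along the chains $H \le G_1 \le G$ and $H \le G_2 \le G$, together with $[G:G_1]=[G:G_2]=k$, gives
\[
[G_1:H] \;=\; [G_2:H] \;=:\; d \;\ge\; 1, \qquad [G:H] = kd .
\]

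Next I would introduce the bipartite graph $B$ whose vertex classes are the $k$ cosets of $G_1$ and the $k$ cosets of $G_2$ in $G$, with a $G_1$-coset $x$ joined to a $G_2$-coset $y$ exactly when $x \cap y \ne \emptyset$. The key point — and the only place the hypothesis $[G:G_1]=[G:G_2]$ enters — is that $B$ is $d$-regular on both sides: fixing $x = a+G_1$, two elements $z_1,z_2 \in x$ lie in a common coset of $G_2$ iff $z_1-z_2 \in G_2$, and since $z_1-z_2 \in G_1$ automatically, this happens iff $z_1-z_2 \in H$; hence $x$ meets precisely $[G_1:H]=d$ cosets of $G_2$, and symmetrically every $G_2$-coset meets precisely $[G_2:H]=d$ cosets of $G_1$.

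Since $B$ is $d$-regular with $d \ge 1$ and both sides have size $k$, Hall's condition holds trivially: for any family $S$ of $G_1$-cosets, the $d|S|$ edges issuing from $S$ all terminate in $N(S)$, and no vertex of $N(S)$ receives more than $d$ of them, whence $|N(S)| \ge |S|$. Hall's marriage theorem then provides a perfect matching of $B$, i.e.\ enumerations $x_1,\dots,x_k$ of the $G_1$-cosets and $y_1,\dots,y_k$ of the $G_2$-cosets with $x_i \cap y_i \ne \emptyset$ for every $i$. Choosing any $g_i \in x_i \cap y_i$ completes the argument: the elements $g_1,\dots,g_k$ lie in the pairwise distinct cosets $x_1,\dots,x_k$, hence represent every coset of $G_1$ exactly once, and for the same reason (via $y_1,\dots,y_k$) they represent every coset of $G_2$ exactly once; thus $G_1 + \{g_1,\dots,g_k\} = G_2 + \{g_1,\dots,g_k\} = G$ are both tilings.

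I do not anticipate a genuine obstacle: everything reduces to the $d$-regularity of $B$, which is nothing more than the index identity $[G_1 : G_1\cap G_2] = [G_2 : G_1 \cap G_2]$. One could equivalently first pass to the finite abelian group $G/H$, in which $G_1/H$ and $G_2/H$ both have index $k$, solve the problem there, and lift a common transversal back to $G$ (cosets of $G_i$ in $G$ correspond bijectively to cosets of $G_i/H$ in $G/H$); this merely repackages the same graph-theoretic core.
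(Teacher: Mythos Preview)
Your proof is correct. It is, however, a genuinely different route from the paper's.

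The paper argues constructively: first it reduces to working inside $G_1+G_2$ (adding a transversal of $G_1+G_2$ in $G$ at the end), and then quotients by $H=G_1\cap G_2$ to reach the situation where $G_1\cap G_2=\{0\}$ and $G_1+G_2=G$. In that situation both $G_1$ and $G_2$ are finite of the same size $r$, and the ``diagonal'' set $\{g^1_j+g^2_j: j=1,\dots,r\}$, for any enumerations $G_i=\{g^i_1,\dots,g^i_r\}$, is visibly a common transversal. This is completely explicit and avoids any external combinatorial input, but it leans on commutativity when rearranging $g^1_i+g^2_i-(g^1_j+g^2_j)$.

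Your approach packages the problem as a perfect matching in the $d$-regular bipartite incidence graph between $G_1$-cosets and $G_2$-cosets and invokes Hall's marriage theorem. This is less explicit but more conceptual, and it has the advantage of working verbatim for non-abelian groups (the common-transversal theorem for two subgroups of equal finite index holds there too, and the Hall argument is the standard proof). Either approach is fine for the use made of the lemma later in the paper.
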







The proof of Theorem \ref{th:main} follows.

The closed subgroups of $\RR^d$ are, up to a non-singular linear transformation, of the form
\begin{equation}\label{close-subgroup}
\ztrn
\end{equation}
where $m+n=d$, where $m=0, 1, \ldots, d$ \cite[Theorem 9.11]{hewitt2012abstract}. Thus we may assume that $\overline{L+M} = \ztrn$ for some such decomposition $d=m+n$. Next we observe that it is enough to find a bounded common fundamental domain $\Omega'$ of $L,M$ in $\ztrn$ which is measurable in $\ztrn$. Then we can take $\Omega = \Omega'+[0,1]^m\times\Set{0}^n$. 

\subsection{Case $m=0$.}

This is Theorem \ref{th:dense}: $L+M$ is dense in $\RR^d$ and they have the same volume, so there is a bounded common tile for them. 

\subsection{Case $m=d$.}

We have $L+M = \ZZ^d$. The lattices have the same volume, hence the same index in $\ZZ^d$. By Lemma \ref{lm:two} there exists a finite set $F \subseteq \ZZ^d$ such that $L+F = M + F = \ZZ^d$ are tilings. 

\subsection{General case: $0 < m < d$.} Define the discrete subgroups of $\trn$
$$
L_2 = (\trn) \cap L \ \text{ and } \ M_2 = (\trn) \cap M.
$$
By Lemma \ref{lm:ranks} the groups $L_2, M_2$ have rank $n$.
It is clear that
\begin{equation}\label{dense-sum}
\overline{L_2+M_2} = \trn.
\end{equation}
Write
$$
L = L_1\oplus L_2,\ \ \ M=M_1\oplus M_2,
$$
where $L_1, M_1$ are discrete subgroups of $\ztrn$ of rank $m$.
Since the sums are direct it follows that the points of $L_1$ are all different mod $\trn$ and so are all points of $M_1$. Therefore we have the group indices
\begin{equation}\label{indices}
[\ztrn : L_1\oplus\trn] = \det L_1 \ \ \text{ and }\ \ 
[\ztrn : M_1\oplus\trn] = \det M_1.
\end{equation}
We also have that
\begin{equation}\label{l1m1}
L_1+M_1+\trn = \ztrn,
\end{equation}
since the left hand side is a subgroup of the right hand side. If it were a proper subgroup then we could not have $\overline{L+M}=\ztrn$.

Abusing notation we can write $L = L\ZZ^d$, $M=M\ZZ^d$, where $L, M$ are $d\times d$ non-singular matrices. The columns of these matrices can be any basis of the lattices so we choose the first $m$ to be a basis of $L_1$ (resp.\ $M_1$) and the last $n$ to be a basis of $L_2$ (resp.\ $M_2$). The matrices $L, M$ are now lower block triangular
$$
L = \begin{pmatrix}
L_1 & 0 \\ * & L_2
\end{pmatrix},\ \ \ 
M = \begin{pmatrix}
M_1 & 0 \\ * & M_2
\end{pmatrix},
$$
where the $m\times m$ matrices $L_1, M_1$ have integer entries since these entries represent the first $m$ coordinates of a basis of $L_1, M_1 \subseteq \ZZ^m\times\RR^n$. It follows that
$$
\det L = \det L_1 \cdot \det L_2 \ \ \text{ and }\ \ 
\det M = \det M_1 \cdot \det M_2.
$$
Since $\det L = \det M$ and $\det L_1, \det M_1 \in \ZZ$ we have that
\begin{equation}\label{ratios}
\frac{\det L_2}{\det M_2} = \frac{\det M_1}{\det L_1} \in \QQ.
\end{equation}
All determinants in \eqref{ratios} are non-zero and can be assumed positive.

\subsubsection{A simple case. Not strictly necessary for the rest, but easier.}

If, besides $\det L = \det M$, we also have that the ratios in \eqref{ratios} are equal to 1, so that $\det L_i = \det M_i$, $i=1,2$, then, using the case $m=0$ above, we can find a bounded common tile $E'$ of $L_2$ and $M_2$ in $\trn$:
\begin{equation}\label{l2m2eprime}
L_2 \oplus E' = M_2 \oplus E' = \trn.
\end{equation}
From \eqref{indices} the groups $L_1 \oplus \trn$ and $M_1 \oplus \trn$ have the same finite index $\det L_1 = \det M_1$ in the group $\ztrn$, hence, from Lemma \ref{lm:two} we can find a common, finite tile $F$ of them in $\ztrn$:
\begin{equation}\label{l1m1f}
L_1\oplus\trn \oplus F = M_1 \oplus \trn \oplus F = \ztrn.
\end{equation}
From \eqref{l2m2eprime} and \eqref{l1m1f} we obtain
$$
L_1\oplus (L_2 \oplus E') \oplus F =
M_1\oplus (M_2 \oplus E') \oplus F = \ztrn,
$$
so with $E = F \oplus E'$ we obtain the tilings
$$
L \oplus E = M \oplus E = \ztrn.
$$
This concludes the proof of this simple case.

\ 

In general the ratios in \eqref{ratios} are not necessarily 1.
Take now $L_2'$ and $M_2'$ to be super-lattices of $L_2$ and $M_2$ in $\trn$ such that
\begin{equation}\label{indices-new}
[L_2' : L_2] = \det M_1\ \ \text{ and } \ \ [M_2' : M_2] = \det L_1.
\end{equation}
It follows from \eqref{ratios} that
\begin{equation}\label{ratios1}
\det L_2' = \frac{\det L_2}{\det M_1} =
 \frac{\det M_2}{\det L_1} = \det M_2'.
\end{equation}
Since, because of \eqref{dense-sum}, $L_2'$ and $M_2'$ also have a dense sum in $\trn$ it follows from the case $m=0$ in this proof that there is a bounded common tile $E'$ of $L_2'$ and $M_2'$ in $\trn$. We have
$$
\Abs{E'} = \det L_2'  = \det M_2 '.
$$
Let the finite sets $J_2\subseteq L_2'$ and $K_2\subseteq M_2'$ be such that
$$
L_2' = L_2 \oplus J_2 \ \text{ and }\ M_2' = M_2 \oplus K_2
$$
are both tilings, so that it follows from \eqref{indices-new} that $\Abs{J_2} = \det M_1$ and $\Abs{K_2} = \det L_1$.

Since $L_1 + M_1 + \trn = \ztrn$ from \eqref{l1m1} we can find finite sets $J_1 \subseteq L_1$ of size $\Abs{J_1} = \det M_1$ and $K_1 \subseteq M_1$ of size $\Abs{K_1} = \det L_1$ (these sizes follow from \eqref{indices}) such that
\begin{equation}\label{plane-tiling-1}
K_1 \oplus L_1 \oplus \trn = \ztrn
\end{equation}
and
\begin{equation}\label{plane-tiling-2}
J_1 \oplus M_1 \oplus \trn = \ztrn.
\end{equation}

Since $\Abs{J_1} = \Abs{J_2}$ and $\Abs{K_1} = \Abs{K_2}$ we can find bijections
$$
\phi:K_1 \to K_2,\ \ \ \psi:J_1 \to J_2.
$$
Define the sum
\begin{equation}\label{tile-E}
E = \Set{x+y+\phi(x)+\psi(y): x\in K_1,\ y \in J_1} \ \oplus\  E' \ \subseteq \  \ztrn.
\end{equation}
The fact that the sum in \eqref{tile-E} is direct is a byproduct of the proof that follows in which we show that the set $E$ is a common tile for the lattices $L$ and $M$.

For reasons of symmetry we need only verify that
$$
L \oplus E = L_1\oplus L_2 \oplus E = \ZZ^m \times \RR^n
$$
is a tiling.

We first show that this is a packing.
Let $\ell=\ell_1+\ell_2$ and $\wt\ell=\wt\ell_1+\wt\ell_2$ be elements of $L = L_1\oplus L_2$ and assume that the two translates $\ell+E$ and $\wt\ell+E$ overlap on positive measure. This means that there are
$$
x, \wt x \in K_1,\ \ y, \wt y \in J_1
$$
such that
$$
\ell_1 + \ell_2 + x + y + \phi(x) + \psi(y) + E'
\ \text{ and }\ 
\wt\ell_1 + \wt\ell_2 + \wt x + \wt y + \phi(\wt x) + \psi(\wt y) + E'
$$
overlap on positive measure. These can be rewritten as
$$
\underbrace{\ell_1 + y}_{\in L_1} + x + \underbrace{\ell_2 + \phi(x) + \psi(y) + E'}_{\subseteq \trn}
$$
and
$$ 
\underbrace{\wt\ell_1 + \wt y}_{\in L_1} + \wt x + \underbrace{\wt\ell_2 + \phi(\wt x) + \psi(\wt y) + E'}_{\subseteq \trn}.
$$
Since $x, \wt x \in K_1$ we get, because of tiling condition \eqref{plane-tiling-1}, that
\begin{equation}\label{first-conclusion}
\ell_1+y =\wt\ell_1+\wt y \ \ \text{ and }\ \  x = \wt x,
\end{equation}
which of course implies that $\phi(x) = \phi(\wt x)$.
Thus the translates
$$
\ell_2 + \psi(y) + E' \ \ \text{ and }\ \ \wt\ell_2 + \wt\psi(y) + E'
$$
overlap on positive measure. But $\ell_2+\psi(y), \wt\ell_2+\wt\psi(\wt y) \in L_2'$ and $L_2'\oplus E' = L_2 \oplus J_2 \oplus E'$ are tilings, so we get $\ell_2 = \wt\ell_2$ and $\psi(y) = \psi(\wt y)$. The last equation implies $y = \wt y$ since $\psi$ is a bijection. Finally from \eqref{first-conclusion} we obtain $\ell_1 = \wt\ell_1$.

We have shown that the translates of $E'$ that participate in the definition \eqref{tile-E} of the tile $E$ are all non-overlapping and, therefore,
\begin{align}\label{E-volume}
\Abs{E} &= \Abs{E'} \cdot \Abs{K_1} \cdot \Abs{J_1} \\
 &= \det L_2' \cdot \det L_1 \cdot \det M_1 \nonumber\\
 &= \det L_1 \cdot \det L_2' \cdot\Abs{J_2}\nonumber \\
 &= \det L_1 \cdot \det L_2 \nonumber \\
 &= \det L. \nonumber
\end{align} 
We also showed that $L+E$ is a packing. Since the arrangement $L+E$ is periodic it follows that $L\oplus E = \ztrn$ is a tiling. By symmetry so is $M \oplus E = \ztrn$. 

The final bounded common tile $\Omega$ of $L$ and $M$ in $\RR^d$ is then given by
$$
\Omega = E + [0,1)^m\times\Set{0}^n.
$$

\bibliographystyle{alpha}
\bibliography{mk-bibliography.bib}

\end{document}